\newtheorem{remark}{Remark}
\newtheorem{assumption}{Assumption}
\newtheorem{theorem}{Theorem}
\newtheorem{definition}{Definition}
\begin{document}

\baselineskip=15pt

\title[]
{Partially Observable Discrete-time Discounted Markov Games with General Utility }
\thanks{}
\author[Bhabak]{Arnab Bhabak}
\address{Department of Mathematics\\
Indian Institute of Technology Guwahati\\
Guwahati, Assam, India}
\email{bhabak@iitg.ac.in}


\author[Saha]{Subhamay Saha}
\address{Department of Mathematics\\
Indian Institute of Technology Guwahati\\
Guwahati, Assam, India}
\email{saha.subhamay@iitg.ac.in}


\date{}

\begin{abstract} In this paper, we investigate a partially observable zero sum games where the state process is a discrete time Markov chain. We consider a general utility function in the optimization criterion. We show the existence of value for both finite and infinite horizon games and also establish the existence of optimal polices. The main step involves converting the partially observable game into a completely observable game which also keeps track of the total discounted accumulated reward/cost.
\vspace{2mm}

\noindent
{\bf Keywords:} partially observable; zero-sum games; utility function; value of the game; optimal policies. 

\end{abstract}

\maketitle

	\section{Introduction}
	\noindent In this paper we consider a discrete time zero-sum game where the state process evolves like a controlled Markov chain. We also assume that the state has two components one of which is observable while the other is not observable. In the optimization criterion we consider general utility function and discounted reward/cost.  Player 1 is assumed to be the maximizer and player 2 is the minimizer. Both finite and infinite horizon problems are investigated. In both cases we show that the game has value and also establish the existence of optimal policies for both players.
	   
	   Risk-sensitive optimization problems wherein one tries to optimize the expectation of the exponential of the accumulated cost/reward, are widely studied in stochastic optimal control literature. One of the primary reasons for the popularity of this criterion is that unlike risk-neutral optimization criterion, here the risk preference of the controllers are taken into account. Risk-sensitive control problems have been widely studied in literature for both Markov as well as semi-Markov processes. Risk-sensitive control problems for discrete-time Markov chains has been studied in \cite{Rieder14, Meyn02, Sobel87, Stettner07, Marcus96, Jask07}, for continuous-time Markov chains in \cite{Saha14, Wei19, Zhang19, Pal13, Chen19} and for semi-Markov processes in \cite{Saha22, Cadena16, Lian18}. There is also a good amout of literature on risk-sensitive games, both zero-sum and non-zero sum, see \cite{Ghosh14, Rieder17b, Saha21, Daniel19, Pal16, Wei18}. But, in all the above cited literature it is assumed that the state process is completely observable. However, in practice it may be the case that complete information about the state is unavailable to the controller for taking decisions, which makes the study of optimization problems with partial observation quite natural. Risk-sensitive control problems for discrete-time Markov chains with partial observation has been studied in \cite{Rieder17a, Stettner99, Elliott94}. Although, there has been work on partially observable risk-neutral games \cite{Sinha04, Goswami06, Goswami08, Saha14a}, but to the best of our knowledge there has been no work on risk-sensitive games with partial observation. Thus, this work seems to be the first work on partially observable risk-sensitive games.
	   In this paper we consider risk-sensitive zero-sum games with partial observation and general utility function, and thus the exponential utility is a special case of our model. Like in the risk-neutral case here also we convert the partially observable game into an equivalent completely observable game. But the difference with the risk-neutral case is that here we need to keep track of the accumulated cost as well. Since in the considered model the reward/cost function is assumed to be dependant on the unobservable component, so we need to consider the joint conditional distribution of the unobservable state component and the accumulated reward/cost.  
	   
	   The rest of the paper is organized a follows. In Section 2, we describe our game model. Section 3 deals with finite horizon problem. Finally in Section 4 we investigate the infinite horizon problem as a limit of finite horizon problem.
	   \section{Zero-Sum Game Model}

The partially observable risk-sensitive zero-sum Markov game model that we are interested in can be represented by the tuple 
\begin{align}\label{Game model}
(X,Y,A,B,\{A(x)\subset A, B(x)\subset B, x\in X\}, C(x,y,a,b),Q),
\end{align}
where each individual component has the following interpretation. \begin{itemize}
	\item $X,Y$ are the Borel spaces, X represents the observable state space and Y is the non-observable space.
	\item The Borel spaces $A$ and $B$ are the action sets for player 1 and 2 respectively. And for each $x\in X$, $A(x)\subset A$, $B(x)\subset B$ are Borel subsets denoting the set of all admissible actions in state $x\in X$ for player 1 and 2 respectively.
	\item Define $\mathbb{K}=\{(x,a,b): x\in X, a\in A(x), b\in B(x)\}$ to be the set of admissible state-action pairs, which is assumed to be a measurable subset of $X\times A\times B$. Then $C:\mathbb{K}\times Y\to \mathbb{R}$  is the immediate reward function for player 1 and immediate cost function for player 2 respectively. 
	\item For each $(x,y,a,b)\in \mathbb{K} \times Y$, the mapping $Q(B\vert x,y,a,b)$ is the transition probability that the next pair is in $B\in \mathcal{B}(X\times Y)$, where $\mathcal{B}(X\times Y)$ is the collection of all Borel subsets of $X\times Y$,  given the current state is $(x,y)$ and actions $(a,b)\in A(x)\times B(x)$ are chosen by the players.
\end{itemize}
Also we have the discount factor $\beta\in(0,1)$. 
In what follows, we assume that the transition kernel $Q$ has a measurable density $q$ with respect to some $\sigma$-finite measures $\lambda$ and $\nu$, i.e.,
\begin{align}
Q(B\vert x,y,a,b)=\int_{B} q(x^{'},y^{'}\vert x,y,a,b)\lambda(dx^{'})\nu(dy^{'}), \hspace{.5cm} B\in \mathcal{B}(X\times Y).
\end{align}
We also introduce the marginal transition kernel density by
\begin{align*}
q^{X}(x^{'}\vert x,y,a,b):=\int_{Y}q(x^{'},y^{'}\vert x,y,a,b)\nu(dy^{'}).
\end{align*}
We assume that the distribution $Q_{0}$ of $Y_{0}$, the initial (unobservable) state is known to the players. The risk-sensitive partially observed zero sum game evolves in the following way: 
\begin{itemize}
	\item At the $0$th decision epoch, based on the initial observation $x_{0}$, the players choose actions $a_0\in A(x_0)$ and $b_{0}\in B(x_0)$ simultaneously and independent of each other.
    \item As a consequence of these chosen actions player 1 gets a reward $C(x_{0},y_{0},a_0,b_0)$ and player 2 incurs a cost $C(x_{0},y_{0},a_0,b_0)$, where $y_0$ is the initial unobservable state. Note that the reward/cost depends on the unobservable component as well and therefore is itself unobservable.
    \item At the next time epoch the system moves to the next state $(x_1,y_1)$ according to the transition law $Q(.\vert x_{0},y_{0},a_0,b_0)$. If the observable state at time 1 is $x_1$, then based on this observation, the previous observation $x_0$ and the previous pair of actions $(a_0,b_0)$, players 1 and 2 choose actions $a_1\in A(x_1)$ and $b_1\in B(x_1)$ respectively. This yields a reward  $C(x_1,y_1,a_1,b_1)$ for player 1 and a cost $C(x_1,y_1,a_1,b_1)$ for player 2. Now the sequence of events as described above repeats itself.
\end{itemize}
Let $\mathcal{H}_{n}$ be the admissible observable history available upto time n, i.e., $\mathcal{H}_{0}=X$ and  for $n\geq 1$, $\mathcal{H}_{n}=\mathcal{H}_{n-1}\times A \times B \times X$. Thus a typical element of $\mathcal{H}_n$ is given by $h_n=(x_0,a_0,b_0,x_1,a_1,b_1,\ldots,x_{n-1},a_{n-1},b_{n-1},x_n)$, where for each $n$, $x_n$ represents the observable state at the $nth$ decision epoch, $a_n$ is the action chosen by player 1 at the $nth$ decision epoch and $b_n$ is the action chosen by player 2 at the $nth$ decision epoch. We endow these spaces with the Borel sigma-algebra. 
Next we introduce the concept of decision rules and policies.

\begin{definition} Let $P(A)$ and $P(B)$ denote the set of all probability measures on $A$ and $B$ respectively.
	\item[(a)] A measurable mapping $f_{n}:\mathcal{H}_{n}\rightarrow P(A)$ with the property $f_{n}(h_{n})(A(x_n))=1$ for $h_{n}\in \mathcal{H}_{n}$ is called a decision rule at stage n for player 1. Similarly,  a measurable mapping $g_{n}:\mathcal{H}_{n}\rightarrow P(B)$ with the property $g_{n}(h_{n})(B(x_n))=1$ for $h_{n}\in \mathcal{H}_{n}$ is called a decision rule at stage n for player 2.
	\item [(b)] A sequence $\pi=(f_{0},f_{1},...)$, and $\sigma=(g_{0},g_{1},...)$, where $f_{n},g_{n}$'s are decision rules at stage $n$ for all $n$, is called a pair of policies for player 1 and 2 respectively.
\end{definition}

\section{Finite Horizon Problem}

For a fixed policy $\pi=(f_{0},f_{1},...)$ and $\sigma=(g_{0},g_{1},...)$, fixed (observable) initial state $x\in X$, the initial distribution $Q_{0}$ together with the transition kernel $Q$, we obtain by a theorem of Ionescu Tulcea a probability measure $\mathbb{P}^{\pi \sigma}_{xy}$ on $(X\times Y)^{\infty}$ endowed with the product $\sigma$-algebra. More precisely $\mathbb{P}^{\pi \sigma}_{xy}$ is the probability measure under policy $(\pi,\sigma)$ given $X_{0}=x$ and $Y_{0}=y$. On this probability space, for $\omega=(x_0,y_0,x_1,y_1,\ldots,x_n,y_n,\ldots)$ we define the random variables $X_n$ and $Y_n$ via the canonical projections:
$$X_n(\omega)=x_n,\quad Y_n(\omega)=y_n.$$ The action sequence for player 1 is given by $\{A_n\}$ and that of player 2 given by $\{B_n\}$.
Under the policies, $\pi=(f_{0},f_{1},...)$ and $\sigma=(g_{0},g_{1},...)$, the distribution of $A_n$ is given by $f_n(X_0,A_0,B_0,\ldots,X_n)$ and the distribution of $B_n$ is given by $g_n(X_0,A_0,B_0,\ldots,X_n)$.

In this section we look into the $N$ stage optimization problem. For defining the optimality criterion, consider a utility function $U:\mathbb{R}_{+}\rightarrow \mathbb{R}$ which is assumed to be continuous and strictly increasing. 
The discounted cost/reward generated over N stages is given by:
\begin{align}
C_{N}=\sum_{k=0}^{N-1}\beta^{k}C(X_{k},Y_{k},A_{k},B_{k}).
\end{align}
For a fixed initial observable state $x$ and given policies $\pi$ and $\sigma$, the optimization criterion that we are interested in is given by:
\begin{align}\label{optimization problem}
J_{N \pi \sigma}(x):=\int_{Y}\mathbb{E}^{\pi \sigma}_{xy}[U( C_{N})]Q_{0}(dy).
\end{align}
Here player 1 tries to maximize $J_{N \pi \sigma}(x)$ over all policies $\pi$, for each $\sigma$. Analogously, player 2 tries to minimize $J_{N \pi \sigma}(x)$ over all policies $\sigma$, for each $\pi$. This leads to the following definitions of optimal policies and value of the game.
\begin{definition}\label{optimal}
A strategy $\pi^{*}$ is said to be optimal for player 1 in the partially observed model if\\
\begin{align*}
J_{N\pi^{*}\sigma}(x)\geq \sup_{\pi} \inf_{\sigma^{'}} J_{N\pi\sigma^{'}}(x), \hspace{.3cm} for\hspace{.1cm} any \hspace{.2cm} \sigma. 
\end{align*} 
The quantity $\sup_{\pi} \inf_{\sigma^{'}} J_{N\pi\sigma^{'}}(x,y,z)$ is referred to as the lower value of the partially observed game.\\
Similarly, a strategy $\sigma^{*}$ is said to be optimal for player 2 in the partially observed model if\\
\begin{align*}
J_{N\pi\sigma^{*}}(x)\leq \inf_{\sigma}\sup_{\pi^{'}}  J_{N\pi^{'}\sigma}(x), \hspace{.3cm} for\hspace{.1cm} any \hspace{.2cm} \pi.
\end{align*} 
The quantity $\inf_{\sigma}\sup_{\pi^{'}}  J_{N\pi^{'}\sigma}(x,y,z)$ is referred as the upper value of the partially observed game.\\
Hence, a pair optimal strategies $(\pi^{*},\sigma^{*})$ satisfies
\begin{align*}
J_{N\pi\sigma^{*}}\leq J_{N\pi^{*}\sigma^{*}} \leq J_{N\pi^{*}\sigma}
\end{align*} 
for any $\pi ,\sigma$. Thus, $(\pi^{*},\sigma^{*})$ constitutes a saddle point equilibrium. The partially observed game is said to have a value if
\begin{align*}
J_{N}(x)=\sup_{\pi} \inf_{\sigma} J_{N\pi\sigma}(x)=\inf_{\sigma}\sup_{\pi}  J_{N\pi\sigma}(x).
\end{align*}
\end{definition}
Note that if both the players have optimal strategies then the partially observed game has a value. Our aim is to show that the game model under consideration has a value and also there exists a saddle point equilibrium. Towards that end,  we assume that the following assumptions are in force throughout the paper.

\begin{assumption}$\mbox{}$\\

		\begin{itemize}
		\item[(i)] For each $x\in X$, the sets $A(x)$ and $B(x)$ are compact subsets of $A$ and $B$. Also the mappings $x\rightarrow A(x)$ and $x\rightarrow B(x)$ are continuous.
		\item [(ii)] $(x,y,a,b)\rightarrow C(x,y,a,b)$ is continuous. 
		\item[(iii)]  $(x,y,x^{'},y^{'},a,b)\rightarrow q(x^{'},y^{'}\vert x,y,a,b)$ is bounded and continuous. 
		\item [(iv)] C is also bounded, i.e., there exist constants $\underline{a}, \bar{a}$ with $0<\underline{a}\leq C(x,y,a,b)\leq
		\bar{a}$.
	\end{itemize}
\end{assumption}
In literature partially observable risk-neutral games are treated by first converting it to an equivalent completely observable game. Following that approach, in this risk-sensitive approach also we first convert our partially observed game problem into a complete observed game problem. We show the existence of the value function and optimal strategies in case of the equivalent completely observable model and then revert back to our partially observed model.

Now in the unobserved model, the state $y$ and the cost accumulated so far cannot be observed because it depends on $y$. Thus we need to estimate them. For that we consider the following set of probability measures on $Y\times \mathbb{R}_{+}$:\\
\begin{center}
	$P_b(Y\times \mathbb{R}_{+})$:=\{$\mu$ is a probability measure on the Borel $\sigma$- algebra $\mathcal{B}(Y\times \mathbb{R}_{+})$ such that there exists a constant $K=K(\mu)>0$ with $\mu(Y\times [0,K])=1$\}.
\end{center}
The elements of the above set will essay the role of the conditional distribution  of the hidden state component and accumulated cost. The precise interpretation will be seen in Theorem \ref{Connection between existing probability}. In order to estimate the unobserved state component and accumulated cost we define the following updating operator $\Phi:X\times A \times B\times X\times P_b(Y\times \mathbb{R}_{+})\times \mathbb{R}_{+}\rightarrow P_b(Y\times \mathbb{R}_{+})$ given by
\begin{align}\label{updating}
\Phi(x,a,b,x^{'},\mu, z)(B):=\frac{\int_{Y}\int_{\mathbb{R}_{+}}\big(\int_{B}q(x^{'},y^{'}\vert x,y,a,b)\nu(dy^{'})\delta_{s+zC(x,y,a,b)}(ds^{'})\big)\mu(dy,ds)}{\int_{Y}q^{X}(x^{'}\vert x,y,a,b)\mu^{Y}(dy)},
\end{align}
where $B\in \mathcal{B}(Y\times \mathbb{R}_{+})$ and $\mu^{Y}(dy)=\mu(dy, \mathbb{R}_{+})$ is the Y-marginal distribution of $\mu$. Going forward we will also use the notation $\mu^{s}(ds):=\mu(Y,ds)$, which will denote the S-marginal. We define the updating operator only when the denominator is positive. For $h_{n}=(x_{0},a_{0},b_{0},.....,x_{n})$ and  $B\in \mathcal{B}(Y\times \mathbb{R}_{+})$ we now define the sequence of probability measures
\begin{align}\label{prob measure}
&\mu_{0}(B\vert h_{0}):=(Q_{0}\times \delta_{0})(B),\nonumber\\
&\mu_{n+1}(B\vert h_{n},a,b,x^{'})= \Phi(x_{n},a,b,x^{'},\mu_{n}(.\vert h_{n}), \beta^{n})(B).
\end{align}
The next theorem provides the interpretation of the above defined sequence of probability measures $(\mu_{n})$ as the sequence of conditional distributions. For that purpose we first define the sequence of random variables:

\begin{align*}
S_{0}:=0, \hspace{.2cm} S_{n}:=\sum_{k=0}^{n-1}\beta^{k}C(X_{k},Y_{k},A_{k},B_{k}),   \hspace{.2cm} n\in \mathbb{N}
\end{align*}
We then have the following result which generalizes Theorem 1 of \cite{Rieder17a} to the game setting.
\begin{theorem}\label{Connection between existing probability}
	Suppose that $(\mu_{n})$ is given by the recursion (\ref{prob measure}). For $n\geq 0$, for a given initial observable state $x\in X$ and given policies $\pi=(f_0,f_1,\ldots)$ and $\sigma=(g_0,g_1,\ldots)$ of the respective players we have
	{\small\begin{align*}
	\mathbb{P}_{x}^{\pi \sigma}((Y_{n},S_{n})\in B\vert X_{0},A_{0},B_{0},....,X_{n})=\mu_{n}(B\vert X_{0},A_{0},B_{0},....,X_{n}), \hspace{.2cm} \mathbb{P}^{\pi \sigma}_{x}-a.s., \hspace{.2cm} for \hspace{.2cm} B\in \mathcal{B}(Y\times \mathbb{R}_{+}),
	\end{align*}} where $\mathbb{P}_{x}^{\pi \sigma}(.):=\int \mathbb{P}^{\pi \sigma}_{xy}(.)Q_{0}(dy)$.
\end{theorem}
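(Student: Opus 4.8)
The plan is to prove the identity by induction on $n$, using the recursive structure of both the probability measures $\mu_n$ defined in (\ref{prob measure}) and the conditional distributions appearing on the left-hand side. For the base case $n=0$, we note that $S_0 = 0$ identically and $Y_0$ has distribution $Q_0$ under $\mathbb{P}_x^{\pi\sigma}$ by construction, so that $\mathbb{P}_x^{\pi\sigma}((Y_0,S_0)\in B \mid X_0) = (Q_0 \times \delta_0)(B) = \mu_0(B \mid h_0)$, which matches the definition. The substantive content is the inductive step, where we assume the claim holds at stage $n$ and must verify it at stage $n+1$.

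For the inductive step, I would compute the conditional distribution of $(Y_{n+1}, S_{n+1})$ given $(X_0, A_0, B_0, \ldots, X_{n+1})$ by conditioning further on the stage-$n$ history $(X_0,\ldots,X_n)$ together with the actions $A_n, B_n$. The key observations are: (i) $S_{n+1} = S_n + \beta^n C(X_n, Y_n, A_n, B_n)$, so the accumulated cost updates deterministically given $(Y_n, S_n, X_n, A_n, B_n)$; (ii) conditionally on $(X_n, Y_n, A_n, B_n)$, the pair $(X_{n+1}, Y_{n+1})$ has law $Q(\cdot \mid X_n, Y_n, A_n, B_n)$, which by assumption has density $q$; and (iii) the actions $A_n, B_n$ are chosen based only on the observable history, hence are conditionally independent of $Y_n$ given that history. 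Writing out the relevant joint density and then conditioning on the new observation $X_{n+1} = x'$ — which amounts to dividing by the marginal $q^X$-density integrated against $\mu_n^Y$ — produces precisely the expression defining the updating operator $\Phi(x_n, a_n, b_n, x', \mu_n(\cdot \mid h_n), \beta^n)$ in (\ref{updating}), with the Dirac mass $\delta_{s + \beta^n C(x,y,a,b)}$ accounting for the deterministic cost update. This matches $\mu_{n+1}$ by the recursion, completing the step.

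The main obstacle is the careful measure-theoretic bookkeeping in the conditioning argument: one must justify that the conditional law of $(X_{n+1}, Y_{n+1}, A_n, B_n)$ given the stage-$n$ observable history factors correctly, i.e., that given $(X_0,\ldots,X_n)$ the unobservable $Y_n$ has conditional law $\mu_n^Y$ (the $Y$-marginal of the inductive hypothesis), that the actions depend only on the observable history and thus integrate out against the decision rules $f_n, g_n$ without interacting with $Y_n$, and that the transition kernel then acts on $Y_n$. A clean way to organize this is to verify the claimed identity by checking that for every bounded measurable test function on the observable history up to time $n+1$ and every $B \in \mathcal{B}(Y \times \mathbb{R}_+)$, the expectations of the indicator $\mathbf{1}_{\{(Y_{n+1},S_{n+1})\in B\}}$ times the test function agree when the conditional distribution is replaced by $\mu_{n+1}$; this reduces everything to Fubini's theorem and the defining property of the Ionescu–Tulcea measure. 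The fact that $C$ is bounded below by $\underline a > 0$ and above by $\bar a$ (Assumption 1(iv)) guarantees that $S_n$ stays in a compact interval, so that $\mu_n \in P_b(Y \times \mathbb{R}_+)$ and the denominator in $\Phi$ is well-defined and positive (being bounded below using boundedness and continuity of $q$, together with positivity of the relevant marginals), so the updating operator is legitimately applied at each stage.
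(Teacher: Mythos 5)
Your proposal is correct and follows essentially the same route as the paper: induction on $n$, with the inductive step verified by integrating against bounded measurable test functions of the observable history, using the Ionescu--Tulcea structure of $\mathbb{P}_x^{\pi\sigma}$, the recursion \eqref{prob measure} for $\mu_{n+1}$, and Fubini's theorem to cancel the normalizing constant in the updating operator $\Phi$. The only cosmetic difference is that the paper carries the induction with a general test function $v(h_n,y_n,s_n)$ and specializes to indicators at the end, whereas you test product functions directly; these are equivalent by a monotone class argument.
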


\begin{proof} We first show that 
\begin{align}\label{expectation}
\mathbb{E}^{\pi \sigma}_{x}[v(X_{0},A_{0},B_{0},X_{1},....,X_{n},Y_{n},S_{n})]=\mathbb{E}^{\pi \sigma}_{x}[v^{'}(X_{0},A_{0},B_{0},X_{1},....,X_{n})]
\end{align}
for all bounded and measurable $v:\mathcal{H}_{n}\times Y\times \mathbb{R}_{+}\rightarrow \mathbb{R}$ and 
\begin{align*}
v^{'}(h_{n}):=\int_{Y}\int_{\mathbb{R}_{+}}v(h_{n},y_{n},s_{n})\mu_{n}(dy_{n},ds_{n}\vert h_{n}).
\end{align*}
We use induction on $n$. The basis step is true, as for $n=0$, both sides reduce to $\int v(x,y,0)Q_{0}(dy)$. Now suppose that the statement is true for $n-1$. we simply write $f_{n},g_{n}$ in place of $f_{n}(h_{n}), g_{n}(h_{n})$. For a given observable history $h_{n-1}$, the left hand side of (\ref{expectation}) becomes:

\begin{align*}
&\mathbb{E}^{\pi \sigma}_{x}[v(h_{n-1},A_{n-1},B_{n-1},X_{n},Y_{n},S_{n})]=\int_B\int_A\int_{Y}\int_{\mathbb{R}_{+}}\mu_{n-1}(dy_{n-1},ds_{n-1}\vert h_{n-1})\times \\&
\int_{Y}\int_{X} \nu(dy_{n})\lambda(dx_{n})q(x_{n},y_{n}\vert x_{n-1},y_{n-1},a_{n-1},b_{n-1})\times\\&
\int_{\mathbb{R}_{+}} \delta_{s_{n-1}+\beta^{n-1}C(x_{n-1},y_{n-1},a_{n-1},b_{n-1})}(ds_{n})v(h_{n-1},a_{n-1},b_{n-1},x_{n},y_{n},s_{n})f_{n-1}(da_{n-1})g_{n-1}(db_{n-1})\\&
=\int_B\int_A\int_{Y}\int_{\mathbb{R}_{+}}\mu_{n-1}(dy_{n-1},ds_{n-1}\vert h_{n-1})
\int_{Y}\int_{X} \nu(dy_{n})\lambda(dx_{n})q(x_{n},y_{n}\vert x_{n-1},y_{n-1},a_{n-1},b_{n-1})\times\\&
 v(h_{n-1},a_{n-1},b_{n-1},x_{n},y_{n},s_{n-1}+\beta^{n-1}C(x_{n-1},y_{n-1},a_{n-1},b_{n-1}))f_{n-1}(da_{n-1})g_{n-1}(db_{n-1}).
\end{align*}
While the right hand side becomes:
\begin{align*}
&\mathbb{E}^{\pi \sigma}_{x}[v^{'}(X_{0},A_{0},B_{0},X_{1},....,X_{n})]\\&=\int_B\int_A\int_{Y}\int_{\mathbb{R}_{+}}\mu_{n-1}(dy_{n-1},ds_{n-1}\vert h_{n-1})\times \\&
\int_{X} \lambda(dx_{n})q^{X}(x_{n}\vert x_{n-1},y_{n-1},a_{n-1},b_{n-1})v^{'}(h_{n-1},a_{n-1},b_{n-1},x_{n})f_{n-1}(da_{n-1})g_{n-1}(db_{n-1})\\
&=\int_B\int_A\int_{Y}\mu_{n-1}^{Y}(dy_{n-1}\vert h_{n-1})\times 
\int_{X} \lambda(dx_{n})q^{X}(x_{n}\vert x_{n-1},y_{n-1},a_{n-1},b_{n-1})\times \\&
\int_{Y}\int_{\mathbb{R}_{+}} \mu_{n}(dy_{n},ds_{n}\vert h_{n})v(h_{n-1},a_{n-1},b_{n-1},x_{n},y_{n},s_{n})f_{n-1}(da_{n-1})g_{n-1}(db_{n-1})\\&
=\int_B\int_A\int_{Y}\int_{X}\nu(dy_{n})\lambda(dx_{n})\int_{Y}\int_{\mathbb{R}_{+}}\mu_{n-1}(dy_{n-1},ds_{n-1}\vert h_{n-1})q(x_{n},y_{n}\vert x_{n-1},y_{n-1},a_{n-1},g_{n-1})\times\\&
\int_{\mathbb{R}_{+}}\delta_{s_{n-1}+\beta^{n-1}C(x_{n-1},y_{n-1},f_{n-1},g_{n-1})}(ds_{n})v(h_{n-1},f_{n-1},g_{n-1},x_{n},y_{n},s_{n})f_{n-1}(da_{n-1})g_{n-1}(db_{n-1})\\&
=\int_B\int_A\int_{Y}\int_{\mathbb{R}_{+}}\mu_{n-1}(dy_{n-1},ds_{n-1}\vert h_{n-1})
\int_{Y}\int_{X} \nu(dy_{n})\lambda(dx_{n})q(x_{n},y_{n}\vert x_{n-1},y_{n-1},a_{n-1},b_{n-1})\times\\&
v(h_{n-1},a_{n-1},b_{n-1},x_{n},y_{n},s_{n-1}+\beta^{n-1}C(x_{n-1},y_{n-1},a_{n-1},b_{n-1}))f_{n-1}(da_{n-1})g_{n-1}(db_{n-1}),
\end{align*}	
where we use the recursion for $\mu_{n}$ in the third equation and use Fubini's theorem, to cancel out the normalizing constant of $\mu_{n}$. Hence we are done by induction.\\
Now, in particular, if we take $v=1_{B\times C}$ with $B\in \mathcal{B}(Y\times \mathbb{R}_{+})$ and $C\subset X\times A\times B\times ....\times X$ a measurable set of histories until time $n$ then we get from \eqref{expectation},
{\footnotesize\begin{align*}
\mathbb{P}_{x}^{\pi \sigma}((Y_{n},S_{n})\in B, (X_{0},A_{0},B_{0},....,X_{n})\in C)=\mathbb{E}^{\pi \sigma}_{x}[\mu_{n}(B\vert X_{0},A_{0},B_{0},....,X_{n})1_{C}(X_{0},A_{0},B_{0},....,X_{n})]
\end{align*} }
This establishes the fact that $\mu_{n}(B\vert X_{0},A_{0},B_{0},....,X_{n})$ is a conditional $\mathbb{P}^{\pi \sigma}_{x}$-distribution of $(Y_{n},S_{n})$ given the history $(X_{0},A_{0},B_{0},....,X_{n})$.
\end{proof}	
Now we again look at the optimization problem (\ref{optimization problem}). Motivated by the previous result we define for $x\in X$, $\mu\in P_b(Y\times \mathbb{R}_{+})$, $z\in (0,1]$ and $n=1,2,....,N$:

\begin{align}\label{complete}
V_{n\pi \sigma}(x,\mu,z):=\int_{Y} \int_{\mathbb{R}_{+}}\mathbb{E}^{\pi \sigma}_{xy}\bigg[U\big(s+z\sum_{k=0}^{n-1}\beta^{k}C(X_{k},Y_{k},A_{k},B_{k})\big)\bigg]\mu(dy,ds).
\end{align}	
 We solve our optimization problem by using a state augmentation technique. For that purpose we define, for a probability measure $\mu\in P(Y)$:
\begin{align*}
Q^{X}(B\vert x,\mu,a,b):=\int_{B}\int_{Y}q^{X}(x^{'}\vert x,y,a,b)\mu(dy)\lambda(dx^{'}),  \hspace{.5cm} B\in \mathcal{B}(X).
\end{align*}
	
We now consider the completely observable model with new state space $E=X\times P_b(Y\times \mathbb{R}_{+})\times (0,1]$. The action spaces for player 1 and player 2 are same as the partially observable model. One stage cost/reward is $0$ and the terminal cost/reward function is $V_{0}(x,\mu,z):=\int_{Y}\int_{\mathbb{R}_{+}}U(s)\mu(dy,ds)$. Since for all $\mu\in P_b(Y\times \mathbb{R}_{+})$ the support of $\mu$ in the s-component is bounded, the expectation is well defined. The transition law for the new model is given by $\tilde{Q}(.\vert x,\mu,z,a,b)$, which for $(x,\mu,z,a,b)\in E\times A\times B$, and a Borel measurable subset $B\in E$ is defined by

\begin{align*}
\tilde{Q}(B\vert x,\mu,z,a,b):=\int_{X}1_{B}\big((x^{'},\Phi(x,a,b,x^{'},\mu,z),\beta z)\big)Q^{X}(dx^{'}\vert x,\mu^{Y},a,b).
\end{align*}
The decision rules for player 1 in the newly defined model are given by measurable mappings $f:E\rightarrow P(A)$ such that $f(x,\mu,z)(A(x))=1$. Similarly, the decision rules for player 2 are given by measurable mappings $g:E\rightarrow P(B)$ such that $g(x,\mu,z)(B(x))=1$. We denote by $F_1$ the set of all decision rules for player 1 and $F_2$ denotes the same for player 2. For player 1, we denote by $\Pi_1^{M}$ the set of all Markov policies $\pi=(f_{0},f_{1},...)$ with $f_{n}\in F_1$ for all $n\geq 0$.  $\Pi_2^{M}$ represents the same for player 2.

Let $\mathcal{C}(E)=\{v:E\to \mathbb{R},\,v\,\, \textup{is continuous and }\,\,v\geq V_0\}$. Note that we consider the topology of weak convergence on $P_b(Y\times \mathbb{R}_{+})$. For $v\in \mathcal{C}(E)$, $(\zeta,\eta)\in P(A(x))\times P(B(x))$ and $(f,g)\in F_1\times F_2$, we consider the following operators: 
{\small\begin{align*}(T_{fg}v)(x,\mu,z):=\int_B\int_A\int_{X}v\big(x^{'},\Phi(x,a,b,x^{'},\mu,z),\beta z\big)Q^{X}(dx^{'}\vert x,\mu^{Y},a,b)f(x,\mu,z)(da)g(x,\mu,z)(db)
\end{align*}}
{\small\begin{align*}(Lv)(x,\mu,z,\zeta,\eta):=\int_B\int_A\int_{X}v\big(x^{'},\Phi(x,a,b,x^{'},\mu,z),\beta z\big)Q^{X}(dx^{'}\vert x,\mu^{Y},a,b)\zeta(da)\eta(db).
\end{align*}}

\begin{align}\label{Optimal operator}
Tv(x,\mu,z):=\inf_{g}\sup_{f}T_{fg}v=\inf_{\zeta}\sup_{\eta}(Lv)(x,\mu,z,\zeta,\eta),  \hspace{.3cm}  (x,\mu,z)\in E.
\end{align}
Next we have the following theorem:
\begin{theorem}\label{Finite horizon}
	\begin{itemize}
		\item[(a)] Let $\pi=(f_{0},f_{1},...,f_{N-1})$ and $\sigma=(g_{0},g_{1},...,g_{N-1})$ be two policies for player 1 and 2 respectively. Then it holds that for all $n=1,2,...,N$,
		$$V_{n\pi \sigma}(x,\mu,z)=T_{f_{0}g_{0}}T_{f_{1}g_{1}}...T_{f_{n-1}g_{n-1}}V_{0}.$$
		\item[(b)] For all $n=1,2,...,N$ let $V_n=\inf_{\pi}\sup_{\sigma}V_{n\pi\sigma}$. Then $V_{n}\in \mathcal{C}(E)$ and 
		\begin{align*}
		V_{n}(x,\mu,z)=TV_{n-1}(x,\mu,z).
		\end{align*}
		\item[(c)]  For $n=1,2,...,N$ there exists measurable functions $(\gamma_{n}^{*},\delta_{n}^{*})\in F_1\times F_2$ such that
		{\small\begin{align*}
		LV_{n-1}(x,\mu,z,\zeta,\delta_n^{*}(x,\mu,z)) \leq LV_{n-1}(x,\mu,z,\gamma_n^*(x,\mu,z),\delta_n^{*}(x,\mu,z)) \leq LV_{n-1}(x,\mu,z,\gamma^*_n(x,\mu,z),\eta) ,
		\end{align*}}
		for all $(\zeta,\eta)\in P(A(x))\times P(B(x))$  and $(x,\mu,z)\in E$. Then $V_{N}(\cdot,Q_0\times\delta_0,1)$ is the value of the $N$ stage partially observable stochastic game and $(\pi^{*},\sigma^{*})=(f_{n}^{*},g^{*}_{n})_{n=0,1,...,N-1}$ with $f^{*}_{n}(h_n)=\gamma_{N-n}^{*}(x_n,\mu(\cdot|h_n),\beta^n)$ and $g^{*}_{n}=\delta_{N-n}^{*}(x_n,\mu(\cdot|h_n),\beta^n)$ are optimal policies for player 1 and 2 respectively.
	\end{itemize}
	
\end{theorem}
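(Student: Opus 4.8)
The plan is to regard the completely observable model $(E,A,B,\tilde Q)$ with zero running reward and terminal reward $V_0$ as an ordinary finite-horizon zero-sum stochastic game, to prove the three assertions for it by backward induction together with a one-stage minimax/measurable-selection argument, and then to transfer everything to the partially observable game via Theorem \ref{Connection between existing probability}.

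For (a) I would induct on $n$. For $n=1$, a direct computation using the definition \eqref{updating} of $\Phi$ shows that for any $(f,g)\in F_1\times F_2$,
\[
(T_{fg}V_0)(x,\mu,z)=\int_B\int_A\int_Y\int_{\mathbb{R}_+}U\big(s+zC(x,y,a,b)\big)\,\mu(dy,ds)\,f(x,\mu,z)(da)\,g(x,\mu,z)(db),
\]
because integrating $V_0(x',\Phi(x,a,b,x',\mu,z),\beta z)=\int U(s')\,\Phi(x,a,b,x',\mu,z)(dy',ds')$ against $Q^X(dx'\mid x,\mu^Y,a,b)$ makes the normalising denominator of $\Phi$ cancel (Fubini) and leaves $\int_X q^X(x'\mid x,y,a,b)\lambda(dx')=1$; this is precisely $V_{1\pi\sigma}(x,\mu,z)$ from \eqref{complete}. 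For the inductive step I would condition on $(X_0,A_0,B_0,X_1)$, split $s+z\sum_{k=0}^{n-1}\beta^kC(X_k,Y_k,A_k,B_k)=\big(s+zC(x,Y_0,A_0,B_0)\big)+\beta z\sum_{k=0}^{n-2}\beta^kC(X_{k+1},Y_{k+1},A_{k+1},B_{k+1})$, and use the filter recursion \eqref{prob measure} (equivalently the argument of Theorem \ref{Connection between existing probability}, now run with the general initial measure $\mu$ and discount offset $z$) to recognise the inner expectation as $V_{n-1}$ evaluated at $(X_1,\Phi(x,A_0,B_0,X_1,\mu,z),\beta z)$; the inductive hypothesis and the definition of $T_{fg}$ then close the induction. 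Specialising to $\mu=Q_0\times\delta_0$, $z=1$ gives $V_{N\pi\sigma}(x,Q_0\times\delta_0,1)=J_{N\pi\sigma}(x)$ directly from \eqref{optimization problem}, which is the bridge back to the partially observable game.

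The analytic core is (b)--(c). I would first show that $T$ maps $\mathcal{C}(E)$ into itself. The main step, and the place I expect the real difficulty, is that for $v\in\mathcal{C}(E)$ the map
\[
(x,\mu,z,a,b)\longmapsto\int_X v\big(x',\Phi(x,a,b,x',\mu,z),\beta z\big)\,Q^X(dx'\mid x,\mu^Y,a,b)
\]
is continuous; this needs the weak continuity of the belief-update $\Phi$ jointly in all its arguments and of $Q^X$, extracted from Assumption 1(iii) (bounded continuous density $q$), and it is essential that the $Q^X$-integration absorbs the denominator of $\Phi$ so that no division by small quantities occurs. That the expression is $\ge V_0$ follows from the base-case identity above, since $U$ is increasing and $C\ge\underline a>0$, which also gives $Tv\ge V_0$. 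Then $Lv(x,\mu,z,\cdot,\cdot)$ is bilinear and continuous on the compact convex sets $P(A(x))\times P(B(x))$ (Assumption 1(i)), so by Fan's minimax theorem the one-stage game has a value, namely $Tv(x,\mu,z)$; Berge's maximum theorem together with the continuity of $x\mapsto A(x),B(x)$ yields $Tv\in\mathcal{C}(E)$, and a measurable selection theorem for zero-sum games produces $(\gamma_n^*,\delta_n^*)\in F_1\times F_2$ realising the saddle point of $LV_{n-1}$. Setting $V_n:=TV_{n-1}$ (with $V_0$ as given) we get $V_n\in\mathcal{C}(E)$ by induction; the identity $V_n=\inf_\pi\sup_\sigma V_{n\pi\sigma}$ will drop out of the verification step below applied at an arbitrary state $(x,\mu,z)$ rather than only at $(x,Q_0\times\delta_0,1)$.

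Finally, for optimality I would run a backward induction on $k=0,1,\dots,N$ proving, $\mathbb{P}^{\pi\sigma^*}_x$-a.s.,
\[
\mathbb{E}^{\pi\sigma^*}_x\!\big[V_0(X_N,\mu_N,\beta^N)\mid\mathcal{H}_{N-k}\big]\le V_k(X_{N-k},\mu_{N-k},\beta^{N-k})\qquad\text{for every policy }\pi\text{ of player }1,
\]
where $\mu_m=\mu_m(\cdot\mid X_0,A_0,B_0,\dots,X_m)$. The step uses the tower property, the fact (read off from Theorem \ref{Connection between existing probability} and \eqref{prob measure}) that the conditional law of $X_{m+1}$ given $\mathcal{H}_m,A_m,B_m$ is $Q^X(\cdot\mid X_m,\mu_m^Y,A_m,B_m)$ with $\mu_{m+1}=\Phi(X_m,A_m,B_m,X_{m+1},\mu_m,\beta^m)$, and the left saddle inequality of (c) (player 2 using $\delta^*$, player 1 arbitrary), which reduces the conditional expectation to $LV_{k-1}$ and bounds it by $TV_{k-1}=V_k$. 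At $k=N$, together with $J_{N\pi\sigma^*}(x)=\mathbb{E}^{\pi\sigma^*}_x[U(S_N)]=\mathbb{E}^{\pi\sigma^*}_x[V_0(X_N,\mu_N,\beta^N)]$, this gives $J_{N\pi\sigma^*}(x)\le V_N(x,Q_0\times\delta_0,1)$ for every $\pi$; the symmetric induction with the right saddle inequality gives $J_{N\pi^*\sigma}(x)\ge V_N(x,Q_0\times\delta_0,1)$ for every $\sigma$. Since always $\sup_\pi\inf_\sigma J_{N\pi\sigma}(x)\le\inf_\sigma\sup_\pi J_{N\pi\sigma}(x)$, the two bounds force both to equal $V_N(x,Q_0\times\delta_0,1)$ and show $(\pi^*,\sigma^*)$ is a saddle point, which is (c). I expect the genuine obstacles to be the weak-continuity of $\Phi$ on $P_b(Y\times\mathbb{R}_+)$ (and checking that $\mathcal{C}(E)$-membership survives the possibly non-uniform $s$-support of measures in $P_b$) and the measurable selection of the saddle point; the remaining steps are bookkeeping with Fubini and the tower property.
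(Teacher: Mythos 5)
Your proposal is correct and follows essentially the same route as the paper: part (a) by the same induction with Fubini cancelling the normalising denominator of $\Phi$, and parts (b)--(c) by backward induction combining Fan's minimax theorem, a measurable selection of the one-stage saddle point, and monotonicity of $T$. The only real difference is presentational: you phrase the optimality verification via conditional expectations and the tower property, whereas the paper writes the identical computation as inequalities between composed operators $T_{\gamma\delta}$ applied to $V_0$ (its claims (ii)--(iv)); you are in fact somewhat more explicit than the paper about the two points it leaves implicit, namely the joint weak continuity of $(x,\mu,z,a,b)\mapsto\int_X v\big(x',\Phi(x,a,b,x',\mu,z),\beta z\big)\,Q^X(dx'\mid x,\mu^Y,a,b)$ needed for $Tv\in\mathcal{C}(E)$, and the passage back to history-dependent policies of the partially observable model.
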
	
\begin{proof} 
(a) We establish the above iteration by induction on $n$. For $n=1$ we have,
{\footnotesize\begin{align*}
&T_{f_{0},g_{0}}V_{0}(x,\mu,z)=\int_B\int_A\int_{X} V_{0}\big(x^{'}, \Phi(x,a,b,x^{'},\mu,z),\beta z \big)Q^{X}(dx^{'}\vert x,\mu^{Y},a,b)f_0(x,\mu,z)(da)g_0(x,\mu,z)(db)\\
 &=\int_B\int_A\int_{Y}\int_{\mathbb{R}_{+}}\int_{X}\int_{\mathbb{R}_{+}}U(s^{'})\delta_{s+zC(x,y,a,b)}(ds^{'})q^{X}(x^{'}\vert x,y,a,b)\lambda(dx^{'})\mu(dy,ds)f_0(x,\mu,z)(da)g_0(x,\mu,z)(db)\\
 &=\int_{Y}\int_{\mathbb{R}_{+}}\mathbb{E}^{\pi \sigma}_{x y}\big[U\big(s+zC(x,y, A_0,B_0)\big)\big]\mu(dy,ds)\\
 &=V_{1 \pi \sigma}(x,\mu,z).
\end{align*}}
Now suppose that the statement is true for $V_{n}$. Let $\bar{\pi}=(f_{1},f_{2},...)$ and $\bar{\sigma}=(g_{1},g_{2},...)$ denote the 1-shifted policies. Then we get,
\begin{align*}
&(T_{f_{0}g_{0}}T_{f_{1}g_{1}}...T_{f_{n}g_{n}}V_{0})(x,\mu,z)\\&=\int_B\int_A\int_{X} V_{n\bar{\pi} \bar{\sigma}}\big(x^{'}, \Phi(x,a,b,x^{'},\mu,z),\beta z \big)Q^{X}(dx^{'}\vert x,\mu^{Y},a,b)f_0(x,\mu,z)(da)g_0(x,\mu,z)(db)\\
&=\int_B\int_A\int_{X}\int_{Y}\int_{\mathbb{R}_{+}}\mathbb{E}^{\bar{\pi} \bar{\sigma}}_{x^{'},y^{'}}\big[U(s^{'}+z\sum_{k=0}^{n-1}\beta^{k+1} C(X_{k},Y_{k},A_{k},B_{k}))\big]\\&\Phi(x,a,b,x^{'},\mu,z)(dy^{'},ds^{'})Q^{X}(dx^{'}\vert x,\mu^{Y},a,b)f_0(x,\mu,z)(da)g_0(x,\mu,z)(db)\\
&=\int_B\int_A\int_{X}\int_{Y}\int_{\mathbb{R}_{+}}\mathbb{E}^{\pi,\sigma}\big[U(s^{'}+z\sum_{k=1}^{n}\beta^{k} C(X_{k},Y_{k},A_{k},B_{k}))\vert X_{1}=x^{'},Y_{1}=y^{'}\big]\\&\cdot\int_{Y}\int_{\mathbb{R}_{+}}q(x^{'},y^{'}\vert x,y,a,b)\delta_{s+zC(x,y,a,b)}(ds^{'})\mu(dy,ds)\nu(dy^{'})\lambda(dx^{'})f_0(x,\mu,z)(da)g_0(x,\mu,z)(db)\\
&=\int_B\int_A\int_{Y}\int_{Y}\int_{X}\int_{\mathbb{R}_{+}}\mathbb{E}^{\pi,\sigma}\big[U(s+zC(x,y,a,b)+z\sum_{k=1}^{n}\beta^{k} C(X_{k},Y_{k},A_{k},B_{k}))\vert X_{1}=x^{'},Y_{1}=y^{'}\big]\\&q(x^{'},y^{'}\vert x,y,a,b)\mu(dy,ds)\nu(dy^{'})\lambda(dx^{'})f_0(x,\mu,z)(da)g_0(x,\mu,z)(db)\\
&=\int_{Y}\int_{\mathbb{R}_{+}}\mathbb{E}^{\pi,\sigma}\big[U(s+z\sum_{k=0}^{n}\beta^{k} C(X_{k},Y_{k},A_{k},B_{k}))\big]\mu(dy,ds)\\
&=V_{n+1 \pi \sigma}(x,\mu,z);
\end{align*}
Hence we have the desired conclusion by induction.\\

(b and c) We show by induction on $n$ that\\
(i) $V_{n}=TV_{n-1}\in \mathcal{C}(E)$\\
(ii) $T_{\gamma_{n}\delta^{*}_{n}} T_{\gamma_{n-1}\delta^{*}_{n-1}}...T_{\gamma_{1}\delta^{*}_{1}}V_{0}\leq V_{n}$, for any measurable $\gamma_{1},\gamma_{2},...,\gamma_{n}:E\rightarrow P(A(x))$.\\
(iii) $T_{\gamma^{*}_{n}\delta_{n}} T_{\gamma^{*}_{n-1}\delta_{n-1}}...T_{\gamma_{1}^{*}\delta_{1}}V_{0}\geq V_{n}$, for any measurable $\delta_{1},\delta_{2},...,\delta_{n}:E\rightarrow P(B(x))$.\\
(iv)  $T_{\gamma^{*}_{n}\delta^{*}_{n}} T_{\gamma^{*}_{n-1}\delta^{*}_{n-1}}...T_{\gamma^{*}_{1}\delta^{*}_{1}}V_{0}= V_{n}$.\\
	
Under our assumptions $V_0 \in \mathcal{C}(E)$. For $n=1$, it follows from definition that
$$V_{1}=\inf_{g}\sup_{f}T_{fg}V_{0}=TV_{0}.$$ Under our assumptions the existence of $(\gamma_1^*,\delta_1^*)$ follows from a classical measurable selection theorem \cite{Purves73} and Fan's minimax theorem \cite{Fan53}. Now $(ii)-(iv)$ follows from the definition of $(\gamma_1^*,\delta_1^*)$. Now suppose the statement is true for $n-1$. Since $V_{n-1}\in \mathcal{C}(E)$, we again have the existence of $(\gamma^{*}_{n},\delta^{*}_{n})$. Using the induction hypothesis and monotonicity of the $T$ we obtain $$T_{\gamma_{n}\delta^{*}_{n}} T_{\gamma_{n-1}\delta^{*}_{n-1}}...T_{\gamma_{1}\delta^{*}_{1}}V_{0}\leq T_{\gamma_{n} \delta_{n}^{*}}V_{n-1}\leq T_{\gamma^{*}_{n} \delta_{n}^{*}}V_{n-1}=T_{\gamma^{*}_{n}\delta^{*}_{n}} T_{\gamma^{*}_{n-1}\delta^{*}_{n-1}}...T_{\gamma^{*}_{1}\delta^{*}_{1}}V_{0}$$
for any measurable $\gamma_{1},\gamma_{2},...,\gamma_{n}:E\rightarrow P(A(x))$. On the other hand 
$$T_{\gamma^{*}_{n} \delta_{n}^{*}}V_{n-1}\leq T_{\gamma^{*}_{n} \delta_{n}}V_{n-1}\leq T_{\gamma^{*}_{n}\delta_{n}} T_{\gamma^{*}_{n-1}\delta_{n-1}}...T_{\gamma_{1}^{*}\delta_{1}}V_{0}$$
for any measurable $\delta_{1},\delta_{2},...,\delta_{n}:E\rightarrow P(B(x))$. Thus combining with part (a), we have shown that $((\gamma_N^*,\gamma_{N-1}^*,\ldots,\gamma^*_1),(\delta_N^*,\delta_{N-1}^*,\ldots,\delta^*_1))$ is a saddle point equilibrium for the $N$ stage completely observable game problem with optimization criterion given by \eqref{complete}. The rest of the conclusions now follow from the relation between the partially observable game and completely observable game.
\end{proof}
\section{Infinite Horizon Problem}	
In case of finite horizon problem we have the existence of the value of the game and also we have shown the existence of the optimal strategies. Now we consider the case of infinite horizon, i.e for a given pair of policies $(\pi,\sigma)$ and $x\in X$ we are interested in the optimization problem:
$$J_{\infty\pi\sigma}(x):=\int_{Y}\mathbb{E}^{\pi\sigma}_{xy}\big[U\big(\sum_{k=0}^{\infty}\beta^{k}C(X_{k},Y_{k},A_{k},B_{k})\big)\big]Q_{0}(dy),   \hspace{.3cm} x\in E.$$ The upper value, lower value, optimal strategies and value of the game have the same definitions as in Definition \ref{optimal} with $N$ replaced by $\infty$.
In the infinite horizon case we need to deal with concave and convex utility functions separately.\\
\textbf{Concave utility function:} We first investigate the case of concave utility function. Just as in the finite horizon case we will consider the equivalent completely observable game. To that end we have the following notations.
\begin{align}
&V_{\infty \pi \sigma}(x,\mu,z):=\int_{Y}\int_{\mathbb{R}_{+}}\mathbb{E}^{\pi \sigma}_{x y}\big[U\big( s+z\sum_{k=0}^{\infty}\beta^{k}C(X_{k},Y_{k},A_{k},B_{k})\big)\big]\mu(dy,ds),\nonumber\\&
V_{\infty}(x,\mu,z):=\inf_{\sigma}\sup_{\pi }V_{\infty \pi \sigma}(x,\mu,z).
\end{align}	
We denote 
\begin{align*}
&\bar{a}(\mu, z):=\int_{\mathbb{R}_{+}}U(s+\frac{z \bar{c}}{1-\beta})\mu^{s}(ds),\\
&\underline{a}(\mu,z):=\int_{\mathbb{R}_{+}}U(s+\frac{z\underline{c}}{1-\beta})\mu^{s}(ds), \hspace{.3cm} (\mu,z)\in P_b(Y\times \mathbb{R}_{+})\times (0,1].
\end{align*}
Then we have the main theorem of this section:
\begin{theorem}\label{concave case}
\begin{itemize}
	\item[(a)] 	$V_{\infty}$ is the unique solution of $v=Tv$ in $\mathcal{C}(E)$ with $\underline{a}(\mu,z)\leq v(x,\mu,z) \leq \bar{a}(\mu,z)$ where $T$ is as defined in (\ref{Optimal operator}). Moreover, $T^{n}V_{0}\uparrow V_{\infty}$, $T^{n}\underline{a}\uparrow V_{\infty}$ and $T^{n}\bar{a}\downarrow V_{\infty}$ as $n\rightarrow \infty$. \\
	\item[(b)] There exist measurable functions $(\gamma^{*},\delta^{*})\in F_1\times F_2$ such that
\begin{align}
T_{\gamma \delta^*}(x,\mu,z)\leq T_{\gamma^* \delta^*}(x,\mu,z)\leq T_{\gamma^* \delta}(x,\mu,z) ,
\end{align}	for all $(\gamma,\delta)\in F_1\times F_2$ and $(x,\mu,z)\in E$.
	 Then $V_{\infty}(x,Q_0\times \delta_0,1)$ is the value of the partially observable infinite horizon stochastic game. Moreover, $(\pi^{*},\sigma^{*}) = ((f_0^*,f_1^*,\ldots),(g_0^*,g_1^*,\ldots))$ with $f^{*}_{n}(h_n):= \gamma^*(x_n,\mu_n(\cdot|h_n),\beta^n)$ and \\$g^{*}_{n}(h_n):= \delta^*(x_n,\mu_n(\cdot|h_n),\beta^n)$ are optimal policies for player 1 and 2 respectively.
\end{itemize}
\end{theorem}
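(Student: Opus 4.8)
\medskip
\noindent\textbf{Proof sketch.}
The plan is to work entirely with the completely observable game on $E=X\times P_b(Y\times\mathbb{R}_{+})\times(0,1]$ with terminal reward $V_0$ and the operators $T_{fg},L,T$ of \eqref{Optimal operator}, and to transfer the conclusions back via Theorem~\ref{Connection between existing probability} exactly as at the end of the proof of Theorem~\ref{Finite horizon}. For (a): $T$ is monotone, since each $T_{fg}$ is. Computing the $s$-marginal of $\Phi$ as in the identity $T_{f_0g_0}V_0=V_{1\pi\sigma}$ from the proof of Theorem~\ref{Finite horizon} gives $T_{fg}V_0(x,\mu,z)=\int_B\int_A\int_Y\int_{\mathbb{R}_+}U\big(s+zC(x,y,a,b)\big)\mu(dy,ds)f(da)g(db)\ge V_0(x,\mu,z)$ since $C>0$, so $TV_0\ge V_0$ and $V_n=T^nV_0$ is nondecreasing; the same computation with $\underline c\le C\le\bar c$ and $\sum_{k\ge0}\beta^k=(1-\beta)^{-1}$ yields $T\bar a\le\bar a$, $T\underline a\ge\underline a$ and $V_0\le\underline a\le\bar a$, whence $T^n\bar a\downarrow$, $T^n\underline a\uparrow$ and $V_n\le T^n\underline a\le T^n\bar a\le\bar a$. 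Thus $v^{*}:=\lim_nV_n\le\underline v:=\lim_nT^n\underline a\le\bar v:=\lim_nT^n\bar a$ all exist in $[\underline a,\bar a]$, and each $V_n\in\mathcal C(E)$ by Theorem~\ref{Finite horizon}. That $v^{*}=Tv^{*}$ needs only monotonicity: $V_{n+1}=TV_n\le Tv^{*}$ gives $v^{*}\le Tv^{*}$, while $\sup_fT_{fg}v^{*}\ge\sup_fT_{fg}V_n$ for every $g$ gives $Tv^{*}\ge TV_n=V_{n+1}$, hence $Tv^{*}\ge v^{*}$.

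\medskip
\noindent\emph{Identification of the limits.} The decisive step is $v^{*}=V_\infty$. Writing $S_n=z\sum_{k=0}^{n-1}\beta^kC(X_k,Y_k,A_k,B_k)$ and $R_n:=z\sum_{k\ge n}\beta^kC(X_k,Y_k,A_k,B_k)\in[0,\beta^n\bar c/(1-\beta)]$, concavity of $U$ (decreasing increments, so $U(a+t)-U(a)\le U(b+t)-U(b)$ for $a\ge b\ge0$, $t\ge0$) gives for all $\pi,\sigma$
\begin{align*}
0\le V_{\infty\pi\sigma}(x,\mu,z)-V_{n\pi\sigma}(x,\mu,z)\le\int_{\mathbb{R}_+}\Big(U\big(s+\tfrac{\beta^n\bar c}{1-\beta}\big)-U(s)\Big)\mu^{s}(ds)=:\varepsilon_n(\mu),
\end{align*}
and $\varepsilon_n(\mu)\to0$ by dominated convergence, since $U$ is uniformly continuous on $[0,K(\mu)+\bar c/(1-\beta)]$. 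Since $\varepsilon_n(\mu)$ does not depend on $x,z,\pi,\sigma$, taking $\inf_\sigma\sup_\pi$ yields $0\le V_\infty-V_n\le\varepsilon_n(\mu)$; with $V_n\uparrow v^{*}$ this gives $v^{*}=V_\infty$, the convergence $V_n\to V_\infty$ is uniform on subsets of $E$ of uniformly bounded $s$-support (so $V_\infty\in\mathcal C(E)$), and $V_\infty=TV_\infty$ with $\underline a\le V_\infty\le\bar a$. A short induction in the spirit of Theorem~\ref{Finite horizon}(a) identifies $T^n\bar a(x,\mu,z)=\inf_\sigma\sup_\pi\int\!\!\int\mathbb{E}^{\pi\sigma}_{xy}[U(s+S_n+\beta^nz\bar c/(1-\beta))]\mu(dy,ds)$ and $T^n\underline a$ with $\bar c$ replaced by $\underline c$; since $\beta^nz\underline c/(1-\beta)\le R_n\le\beta^nz\bar c/(1-\beta)$, the same estimate gives $|T^n\bar a-V_\infty|\le\varepsilon_n(\mu)$ and $|T^n\underline a-V_\infty|\le\varepsilon_n(\mu)$, so $T^n\underline a\uparrow V_\infty$ and $T^n\bar a\downarrow V_\infty$. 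Uniqueness: if $v\in\mathcal C(E)$ solves $v=Tv$ with $\underline a\le v\le\bar a$, then $V_n=T^nV_0\le T^nv=v\le T^n\bar a$, and $n\to\infty$ forces $v=V_\infty$. This proves (a).

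\medskip
\noindent\emph{Part (b).} As $V_\infty\in\mathcal C(E)$ and $V_\infty=\inf_\zeta\sup_\eta LV_\infty(\cdot,\zeta,\eta)$ with $(\zeta,\eta)\mapsto LV_\infty(x,\mu,z,\zeta,\eta)$ continuous and bilinear on the compact convex sets $P(A(x))\times P(B(x))$ (Assumption~1), Fan's minimax theorem \cite{Fan53} furnishes a saddle point at each state and \cite{Purves73} a measurable selection $(\gamma^{*},\delta^{*})\in F_1\times F_2$ with $T_{\gamma\delta^{*}}V_\infty\le T_{\gamma^{*}\delta^{*}}V_\infty=V_\infty\le T_{\gamma^{*}\delta}V_\infty$ for all $(\gamma,\delta)$, which is the stated inequality. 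Using $V_0\le V_\infty$ and iterating $T_{\gamma^{*}\delta^{*}}V_\infty=V_\infty$ gives $V_{n\pi^{*}\sigma^{*}}=(T_{\gamma^{*}\delta^{*}})^nV_0\le V_\infty$, and similarly $V_{n\pi\sigma^{*}}\le V_\infty$ for every $\pi$ (from $T_{f\delta^{*}}V_\infty\le V_\infty$); letting $n\to\infty$ and using the tail bound, $V_{\infty\pi\sigma^{*}}\le V_\infty$. For the reverse, the stationarity identity $V_{\infty\pi^{*}\sigma}=T_{\gamma^{*}g_0}\cdots T_{\gamma^{*}g_{n-1}}V_{\infty\pi^{*}\sigma^{(n)}}$ (with $\sigma^{(n)}$ the $n$-shift of $\sigma$), iteration of $V_\infty\le T_{\gamma^{*}g}V_\infty$, and the fact that $\bar a(\mu_n,\beta^nz)-\underline a(\mu_n,\beta^nz)\to0$ (which forces the propagated discrepancy between $V_\infty$ and $V_{\infty\pi^{*}\sigma^{(n)}}$ to vanish) give $V_\infty\le V_{\infty\pi^{*}\sigma}$ for all $\sigma$. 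Hence $V_{\infty\pi\sigma^{*}}\le V_\infty\le V_{\infty\pi^{*}\sigma}$, so the completely observable game has value $V_\infty$ with saddle point $(\pi^{*},\sigma^{*})$. Finally, by Theorem~\ref{Connection between existing probability} and \eqref{complete}, $J_{\infty\pi\sigma}(x)=V_{\infty\pi\sigma}(x,Q_0\times\delta_0,1)$ under the corresponding policies, and the stationary rules $(\gamma^{*},\delta^{*})$ translate into $f^{*}_n(h_n)=\gamma^{*}(x_n,\mu_n(\cdot|h_n),\beta^n)$ and $g^{*}_n(h_n)=\delta^{*}(x_n,\mu_n(\cdot|h_n),\beta^n)$, which proves (b).

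\medskip
\noindent\emph{Main obstacle.} The core is the \emph{policy-uniform} tail estimate $V_{\infty\pi\sigma}-V_{n\pi\sigma}\le\varepsilon_n(\mu)$: it is precisely here that concavity of $U$ is essential, the decreasing-increments property being what makes the contribution of the discarded tail $\sum_{k\ge n}\beta^kC(X_k,Y_k,A_k,B_k)$ controllable uniformly in the already-accumulated cost $s+S_n$ (for convex $U$ this bound fails, which is why the convex case must be handled separately). The remaining points are technical: continuity of $V_\infty$ (and of $\underline a,\bar a$) on $E$ from continuity of the $V_n$ together with the locally uniform convergence; the inductions identifying $T^n\bar a$, $T^n\underline a$ with $n$-stage game values with modified terminal rewards; and the bookkeeping translating stationary completely observable decision rules into history-dependent partially observable policies.
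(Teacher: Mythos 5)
Your proof follows essentially the same route as the paper's: monotone iteration of $T$ applied to $V_0$, $\underline a$, $\bar a$; a policy-uniform tail estimate from concavity to identify the monotone limits with $V_\infty$ and get $V_\infty=TV_\infty$; squeezing between $T^n\underline a$ and $T^n\bar a$ for uniqueness; and measurable selection plus Fan's minimax theorem to produce a stationary saddle point, which is then iterated and passed to the limit. The only cosmetic difference is your tail bound $\varepsilon_n(\mu)$, derived from the decreasing-increments form of concavity together with uniform continuity of $U$ on the bounded $s$-support of $\mu$, whereas the paper uses the left-derivative inequality $U(s_1+s_2)\le U(s_1)+U'_-(s_1)s_2$ to obtain the explicit rate $\epsilon_n(z)=\beta^n\frac{z\bar c}{1-\beta}U'_-(z\underline c)$; both are correct instances of the same idea.
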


\begin{proof}
	(a) Here we first show that $V_{n}=T^{n}V_{0}\uparrow V_{\infty}$. Since $U:\mathbb{R}_{+}\rightarrow \mathbb{R}$ is increasing and concave, it satisfies the inequality 
$$U(s_{1}+s_{2})\leq U(s_{1})+U^{'}_{-}(s_{1})s_{2},    \hspace{.3cm} s_{1},s_{2}\geq 0,$$ 
where $U^{'}_{-}$ is the left hand side derivative of $U$ that exists since $U$ is concave. Further more, $U^{'}_{-}(s)\geq 0$ and $U^{'}_{-}$ is non increasing. For $(x,\mu,z)\in E$ it holds, $V_{n}(x,\mu,z)\leq V_{\infty}(x,\mu,z)$. Using this, we get
\begin{align}\label{ineq1}
&V_{n \pi \sigma}(x,\mu,z)\leq V_{\infty \pi \sigma}(x,\mu,z)\nonumber\\
&=\int_{Y}\int_{\mathbb{R}_{+}}\mathbb{E}^{\pi \sigma}_{x y}\big[U\big( s+z\sum_{k=0}^{\infty}\beta^{k}C(X_{k},Y_{k},A_{k},B_{k})\big)\big]\mu(dy,ds),\nonumber\\&
\leq \int_{Y}\int_{\mathbb{R}_{+}}\mathbb{E}^{\pi \sigma}_{x y}\big[U\big( s+z\sum_{k=0}^{n-1}\beta^{k}C(X_{k},Y_{k},A_{k},B_{k})\big)\big]\mu(dy,ds),\nonumber\\&
+\int_{Y}\int_{\mathbb{R}_{+}}\mathbb{E}^{\pi \sigma}_{x y}\big[U^{'}_{-}\big( s+z\sum_{k=0}^{n-1}\beta^{k}C(X_{k},Y_{k},A_{k},B_{k})\big)z\sum_{m=n}^{\infty}\beta^{k}C(X_{m},Y_{m},A_{m},B_{m})\big]\mu(dy,ds),\nonumber\\&
\leq V_{n \pi \sigma}(x,\mu,z)+\beta^{n}\frac{z\bar{c}}{1-\beta}\int_{\mathbb{R}_{+}}U^{'}_{-}(s+z\underline{c})\mu^{s}(ds)\nonumber\\&
\leq V_{n \pi \sigma}(x,\mu,z)+\beta^{n}\frac{z\bar{c}}{1-\beta}U^{'}_{-}(z\underline{c})=:V_{n \pi \sigma}(x,\mu,z)+\epsilon_{n}(z),
\end{align}
where $\epsilon_{n}(z)$ is defined by the last equation. Clearly, $\lim_{n\rightarrow \infty}\epsilon_{n}(z)=0$. Now from \eqref{ineq1} we get $V_n(x,\mu,z)\leq V_{\infty}(x,\mu,z)\leq V_n(x,\mu,z)+\epsilon_n(z)$. Taking limit we get $V_{n}=T^{n}V_{0}\uparrow V_{\infty}$. Now, $V_{n+1}=TV_n\leq TV_{\infty}$. Taking limit $n\to \infty$ we get $V_{\infty}\leq TV_{\infty}$. Again, $V_{\infty}\leq V_{n}+\epsilon_{n}$. Now applying operator $T$ on both sides we have $TV_{\infty}\leq T(V_{n}+\epsilon_{n})=V_{n+1}+\epsilon_{n+1}$. Thus again letting $n\rightarrow \infty$ we obtain $TV_{\infty}\leq V_{\infty}$. Hence combining two inequalities we have $V_{\infty}=TV_{\infty}$.\\
Next, we obtain
\begin{align*}
(T\bar{a})(\mu,z)&=\inf_{\eta}\sup_{\zeta}\int_B\int_A\int_{\mathbb{R}_{+}}U(s^{'}+\frac{z\beta \bar{c}}{1-\beta})\Phi(x,a,b,x^{'},\mu,z)(ds^{'})\zeta(da)\eta(db)\\
&\leq \int_{\mathbb{R}_{+}}U(s+z\bar{c}+\frac{z\beta \bar{c}}{1-\beta})\mu^{s}(ds)\\
&=\int_{\mathbb{R}_{+}}U(s+\frac{z \bar{c}}{1-\beta})\mu^{s}(ds)=\bar{a}(\mu,z).
\end{align*}	
By the similar reasoning it can be shown that $T\underline{a}\geq \underline{a}$. Thus we have $T^{n}\bar{a}\downarrow$ and $T^{n}\underline{a}\uparrow$ and the limits exist. Moreover, by iteration we have, 
\begin{align*}
(T^n\underline{a})(x,\mu,z)&=\inf_{\sigma}\sup_{\pi}\int_{Y}\int_{\mathbb{R}_{+}}\mathbb{E}^{\pi \sigma}_{x y}\big[U\big(s+\frac{z\beta^{n} \underline{c}}{1-\beta}+z\sum_{k=0}^{n-1}\beta^{k}C(X_{k},Y_{k},A_{k},B_{k})\big)\big]\mu(dy,ds)\\
&\geq (T^{n}V_{0})(x,\mu,z).\\
(T^n\bar{a})(x,\mu,z)&=\inf_{\sigma}\sup_{\pi}\int_{Y}\int_{\mathbb{R}_{+}}\mathbb{E}^{\pi \sigma}_{x y}\big[U\big(s+\frac{z\beta^{n} \bar{c}}{1-\beta}+z\sum_{k=0}^{n-1}\beta^{k}C(X_{k},Y_{k},A_{k},B_{k})\big)\big]\mu(dy,ds)
\end{align*} 	
Using the fact $U(s_{1}+s_{2})\leq U(s_{1})+U^{'}_{-}(s_{1})s_{2}$, we obtain
\begin{align*}
0&\leq (T^{n}\bar{a})(x,\mu,z)-(T^{n}\underline{a})(x,\mu,z) \leq (T^{n}\bar{a})(x,\mu,z)-(T^{n}V_{0})(x,\mu,z)\\
&\leq \sup_{\sigma}\sup_{\pi}\int_{Y}\int_{\mathbb{R}_{+}}\mathbb{E}^{\pi \sigma}_{x y}\big[U\big(s+\frac{z\beta^{n} \bar{c}}{1-\beta}+z\sum_{k=0}^{n-1}\beta^{k}C(X_{k},Y_{k},A_{k},B_{k})\big)\\&-U\big(s+z\sum_{k=0}^{n-1}\beta^{k}C(X_{k},Y_{k},A_{k},B_{k})\big)\big]\mu(dy,ds)\\
&\leq \epsilon_{n}(z).
\end{align*}
Since $\lim_{n\rightarrow \infty}\epsilon_{n}=0$, we obtain $T^{n}\underline{a}\uparrow V_{\infty}$ and $T^{n}\bar{a}\downarrow V_{\infty}$ as $n\rightarrow \infty$. Also since $V_{\infty}$ is both increasing as well as decreasing limit of continuous functions, it is also continuous.\\
Now for the uniqueness purpose if possible let there be another solution $v\in \mathcal{C}(E)$	of $v=Tv$ with $\underline{a}\leq v\leq \bar{a}$. This then implies that $T^{n}\underline{a}\leq v\leq T^{n}\bar{a}$ for all $n$. Taking the limit $n\rightarrow \infty$ we have the uniqueness of the solution.\\
(b) The existence of $(\gamma^{*},\delta^{*})$ follows again from the measurable selection theorem and the minimax theorem. By monotonicity and the
fact that $V_0\leq V_{\infty} \leq \bar{a}$ we obtain that $\lim_{n\rightarrow \infty}T^{n}_{\gamma^{*},\delta^{*}}V_{0}=\lim_{n\rightarrow \infty}T^{n}_{\gamma^{*},\delta^{*}}V_{\infty}=V_{\infty \pi_{\gamma^*}\sigma_{\delta^*}}$, where $\pi_{\gamma^*}=(\gamma^*,\gamma^*,\ldots)$ and $\sigma_{\delta^*}=(\delta^*,\delta^*,\ldots)$. By the definition of $(\gamma^{*},\delta^{*})$ we obtain
for any $(\gamma,\delta)\in F_1\times F_2$, 
$$T_{\gamma\delta^{*}}V_{\infty} \leq T_{\gamma^{*}\delta^{*}}V_{\infty} \leq T_{\gamma^{*}\delta}V_{\infty}.$$ The property of $(\gamma^{*},\delta^{*})$ also implies that $V_{\infty}=TV_{\infty}=T_{\gamma^{*}\delta^{*}}V_{\infty}$.
 Hence we can also write for any $(\gamma,\delta)\in F_1\times F_2$, 
$$T_{\gamma\delta^{*}}V_{\infty} \leq V_{\infty} \leq T_{\gamma^{*}\delta}V_{\infty}.$$
By iterating this inequality $n$-times we get
$$T_{\gamma_{1}\delta^{*}}T_{\gamma_{2}\delta^{*}}...T_{\gamma_{n}\delta^{*}}V_{\infty} \leq V_{\infty} \leq T_{\gamma^{*}\delta_{1}}T_{\gamma^{*}\delta_{2}}...T_{\gamma^{*}\delta_{n}}$$ for arbitrary $\gamma_{1},\gamma_{2},...,\gamma_{n}$ and $\delta_{1},\delta_{2},...,\delta_{n}$. Letting $n\rightarrow \infty$ we get,
$$V_{\infty \pi \sigma_{\delta^*} }\leq V_{\infty \pi_{\gamma^*} \sigma_{\delta^*} }=V_{\infty}\leq V_{\infty \pi_{\gamma^*} \sigma }$$
for all policies $\pi$ and $\sigma$. The rest of the conclusions are now straight forward.

\end{proof}
\noindent\textbf{Convex Utility function:} Now we consider the case of convex utility function $U$. 
\begin{theorem}
	Theorem \ref{concave case} also holds for convex $U$.
\end{theorem}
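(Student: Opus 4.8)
The plan is to mirror the proof of Theorem \ref{concave case} step by step, replacing the concavity inequality $U(s_1+s_2)\leq U(s_1)+U'_{-}(s_1)s_2$ with the corresponding convexity bound. For convex increasing $U$ one has, for $s_1,s_2\geq 0$,
\[
U(s_1+s_2)\leq U(s_1)+\big(U(s_1+s_2)-U(s_1)\big),
\]
which is a tautology, so instead the useful estimate is the reverse-type control: since $U$ is convex and increasing on $\mathbb{R}_+$, for $0\leq s_1\leq s_1+s_2\leq s_1+M$ we have $U(s_1+s_2)-U(s_1)\leq U'_{-}(s_1+M)\,s_2$ when $U$ is differentiable, or more robustly, using only monotonicity and boundedness of $C$, $U\big(s+z\sum_{k=0}^{\infty}\beta^k C_k\big)-U\big(s+z\sum_{k=0}^{n-1}\beta^k C_k\big)\leq U\big(s+\tfrac{z\bar c}{1-\beta}\big)-U\big(s+z\sum_{k=0}^{n-1}\beta^k C_k\big)$, and the right-hand side is bounded above by the modulus of continuity of $U$ on the compact interval $[0,\tfrac{\bar c}{1-\beta}]$ evaluated at $\beta^n\tfrac{\bar c}{1-\beta}$. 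This gives a deterministic error term $\tilde\epsilon_n(z)\to 0$ uniform in the policies, exactly playing the role of $\epsilon_n(z)$ in the concave proof.

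With that substitution, the argument for part (a) goes through verbatim: first show $V_{n\pi\sigma}\leq V_{\infty\pi\sigma}\leq V_{n\pi\sigma}+\tilde\epsilon_n(z)$, take suprema/infima to get $V_n=T^nV_0\uparrow V_\infty$, then derive $V_\infty\leq TV_\infty$ and $TV_\infty\leq V_\infty$ as before to conclude $V_\infty=TV_\infty$. The bounds $T\bar a\downarrow$ and $T\underline a\uparrow$ and the sandwich $T^n\underline a\leq T^nV_0\leq T^n\bar a$ with $(T^n\bar a)-(T^n\underline a)\leq\tilde\epsilon_n(z)$ are proved identically, using only that $U$ is increasing and continuous (these steps never actually used concavity, only the fact that the accumulated tail cost lies in $[\tfrac{z\beta^n\underline c}{1-\beta},\tfrac{z\beta^n\bar c}{1-\beta}]$). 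Hence $T^nV_0$, $T^n\underline a$, $T^n\bar a$ all converge to $V_\infty$, continuity of $V_\infty$ follows as a monotone limit of continuous functions, and uniqueness of the fixed point in $\mathcal{C}(E)$ between $\underline a$ and $\bar a$ follows from $T^n\underline a\leq v\leq T^n\bar a$. Part (b) — existence of the selectors $(\gamma^*,\delta^*)$ via the measurable selection theorem and Fan's minimax theorem, the identities $\lim_n T^n_{\gamma^*\delta^*}V_0=\lim_n T^n_{\gamma^*\delta^*}V_\infty=V_{\infty\pi_{\gamma^*}\sigma_{\delta^*}}$, and the iteration of the saddle inequalities to obtain $V_{\infty\pi\sigma_{\delta^*}}\leq V_\infty\leq V_{\infty\pi_{\gamma^*}\sigma}$ — is literally unchanged, since it relies only on monotonicity of $T$, the order $V_0\leq V_\infty\leq\bar a$, and the fixed-point property, none of which invoke the shape of $U$.

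The point I expect to need the most care is the convergence $\lim_{n\to\infty}T^n_{\gamma^*\delta^*}V_\infty=V_{\infty\pi_{\gamma^*}\sigma_{\delta^*}}$ in part (b): in the concave case this used the same tail estimate together with monotonicity, and one must check that the convex tail bound $\tilde\epsilon_n$ still controls $|T^n_{\gamma^*\delta^*}V_\infty - V_{\infty\pi_{\gamma^*}\sigma_{\delta^*}}|$ uniformly. Since $\tilde\epsilon_n(z)$ is a deterministic bound depending only on the modulus of continuity of $U$ on a fixed compact interval and $\beta^n$, this is immediate, so in fact there is no genuine obstacle — the only thing to verify carefully is that every inequality in the concave proof that was written with $U'_{-}$ can be re-derived using instead the uniform modulus-of-continuity bound, which holds because $U$ restricted to $[0,\tfrac{\bar c}{1-\beta}]$ is uniformly continuous. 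I would therefore state the proof as: ``Replace $\epsilon_n(z)$ by $\tilde\epsilon_n(z):=\sup_{0\leq s\leq \bar c/(1-\beta)}\big(U(s+\tfrac{z\bar c}{1-\beta})-U(s)\big)$ wherever it appears; since $\tilde\epsilon_n(z)\to 0$ and all other steps used only monotonicity and continuity of $U$, the proof of Theorem \ref{concave case} applies without change.''
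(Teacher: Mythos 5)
Your overall strategy---rerun the concave proof with a convexity-compatible tail estimate in place of $U(s_1+s_2)\leq U(s_1)+U'_-(s_1)s_2$---is exactly what the paper does, and the estimate you mention in passing, $U(s_1+s_2)-U(s_1)\leq U'_-(s_1+M)\,s_2$, is essentially the paper's choice: it uses $U(s_1+s_2)\leq U(s_1)+U'_+(s_1+s_2)s_2$ with $U'_+$ increasing, which yields the error term $\delta_n(\mu,z)=\beta^n\frac{z\bar c}{1-\beta}\int_{\mathbb{R}_+}U'_+\big(s+\frac{z\bar c}{1-\beta}\big)\mu^{s}(ds)\to 0$. However, the estimate you actually commit to is wrong. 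The majorant
\[
U\Big(s+\tfrac{z\bar c}{1-\beta}\Big)-U\Big(s+z\sum_{k=0}^{n-1}\beta^k C_k\Big)
\]
does not tend to zero: the gap between the two arguments is $z\big(\tfrac{\bar c}{1-\beta}-\sum_{k=0}^{n-1}\beta^kC_k\big)$, which converges to $z\big(\tfrac{\bar c}{1-\beta}-\sum_{k=0}^{\infty}\beta^kC_k\big)$; this limit vanishes only if $C_k\equiv\bar c$ along the trajectory and in general is of order $z(\bar c-\underline c)/(1-\beta)$. Hence this quantity is not controlled by the modulus of continuity of $U$ at $\beta^n\bar c/(1-\beta)$. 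The same defect appears in your final definition: $\tilde\epsilon_n(z):=\sup_{0\leq s\leq\bar c/(1-\beta)}\big(U(s+\tfrac{z\bar c}{1-\beta})-U(s)\big)$ contains no $n$ at all, so $\tilde\epsilon_n(z)\to 0$ fails and the sandwich $V_{n\pi\sigma}\leq V_{\infty\pi\sigma}\leq V_{n\pi\sigma}+\tilde\epsilon_n(z)$ gives nothing.

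The repair is either the paper's one-sided-derivative bound above, or a corrected modulus-of-continuity bound: since the tail satisfies $0\leq z\sum_{k=n}^{\infty}\beta^kC_k\leq z\beta^n\tfrac{\bar c}{1-\beta}$, monotonicity alone gives
\[
U\Big(s+z\sum_{k=0}^{\infty}\beta^kC_k\Big)-U\Big(s+z\sum_{k=0}^{n-1}\beta^kC_k\Big)\leq\sup_{0\leq t\leq K(\mu)+\frac{\bar c}{1-\beta}}\Big(U\big(t+z\beta^n\tfrac{\bar c}{1-\beta}\big)-U(t)\Big),
\]
where $K(\mu)$ bounds the support of $\mu^{s}$; the supremum over $t$ must cover the support of $\mu^{s}$, so the resulting error term depends on $\mu$ as well as $z$ --- exactly as the paper's $\delta_n(\mu,z)$ does in the convex case, in contrast to the $\mu$-free $\epsilon_n(z)$ of the concave case. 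With that correction the rest of your outline (the fixed-point identity, the sandwich by $T^n\underline a$ and $T^n\bar a$, and all of part (b)) does go through as you describe, and your route has the minor virtue of using only continuity and monotonicity of $U$ rather than convexity.
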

\begin{proof} For the convex case the proof is along the same lines as in Theorem \ref{concave case}. The only difference is that here we will use another inequality. Note that for $U:\mathbb{R}_{+}\rightarrow \mathbb{R}$ increasing and convex we have the inequality
$$U(s_{1}+s_{2})\leq U(s_{1})+U^{'}_{+}(s_{1}+s_{2})s_{2},  \hspace{.3cm} s_{1},s_{2}\geq 0,$$
where $U^{'}_{+}$ is the right-hand side derivative of $U$ that exists since $U$ is convex. Moreover, $U^{'}_{+}(s)\geq 0$ and $U^{'}_{+}$ is increasing. Thus, we obtain for $(x,\mu,z)\in E$,
\begin{align*}
&V_{n \pi \sigma}(x,\mu,z)\leq V_{\infty \pi \sigma}(x,\mu,z)\nonumber\\
&=\int_{Y}\int_{\mathbb{R}_{+}}\mathbb{E}^{\pi \sigma}_{x y}\big[U\big( s+z\sum_{k=0}^{\infty}\beta^{k}C(X_{k},Y_{k},A_{k},B_{k})\big)\big]\mu(dy,ds),\nonumber\\&
\leq \int_{Y}\int_{\mathbb{R}_{+}}\mathbb{E}^{\pi \sigma}_{x y}\big[U\big( s+z\sum_{k=0}^{n-1}\beta^{k}C(X_{k},Y_{k},A_{k},B_{k})\big)\big]\mu(dy,ds),\nonumber\\&
+\int_{Y}\int_{\mathbb{R}_{+}}\mathbb{E}^{\pi \sigma}_{x y}\big[U^{'}_{+}\big( s+z\sum_{k=0}^{\infty}\beta^{k}C(X_{k},Y_{k},A_{k},B_{k})\big)z\sum_{m=n}^{\infty}\beta^{k}C(X_{m},Y_{m},A_{m},B_{m})\big]\mu(dy,ds),\nonumber\\&
\leq V_{n \pi \sigma}(x,\mu,z)+\beta^{n}\frac{z\bar{c}}{1-\beta}\int_{\mathbb{R}_{+}}U^{'}_{+}(s+\frac{z\bar{c}}{1-\beta})\mu^{s}(ds)\nonumber\\&
\end{align*}
Now let's denote $\delta_{n}(\mu,z):=\beta^{n}\frac{z\bar{c}}{1-\beta}\int_{\mathbb{R}_{+}}U^{'}_{+}(s+\frac{z\bar{c}}{1-\beta})\mu^{s}(ds)$. Then we have $\lim_{n\rightarrow \infty}\delta_{n}(\mu,z)=0$. So we end up with 
$$V_{n}(x,\mu,z)\leq V_{\infty}(x,\mu,z)\leq V_{n}(x,\mu,z)+\delta_{n}(\mu,z).$$
Letting $n\rightarrow \infty$ yields $T^{n}V_{0}\rightarrow V_{\infty}$.\\
We use the same inequality to get,
\begin{align*}
0&\leq (T^{n}\bar{a})(x,\mu,z)-(T^{n}\underline{a})(x,\mu,z) \leq (T^{n}\bar{a})(x,\mu,z)-(T^{n}V_{0})(x,\mu,z)\\
&\leq \sup_{\sigma}\sup_{\pi}\int_{Y}\int_{\mathbb{R}_{+}}\mathbb{E}^{\pi \sigma}_{x y}\big[U\big(s+\frac{z\beta^{n} \bar{c}}{1-\beta}+z\sum_{k=0}^{n-1}\beta^{k}C(X_{k},Y_{k},A_{k},B_{k})\big)\\&-U\big(s+z\sum_{k=0}^{n-1}\beta^{k}C(X_{k},Y_{k},A_{k},B_{k})\big)\big]\mu(dy,ds)\\
&\leq \beta^{n}\frac{z\bar{c}}{1-\beta}\int_{\mathbb{R}_{+}}U^{'}_{+}(s+\frac{z\bar{c}}{1-\beta})\mu^{s}(ds)=\delta_{n}(\mu,z)
\end{align*}
and the right hand side converges to 0 as $n\rightarrow \infty$.
\end{proof}

Few remarks are in order.
\begin{remark}
(1) An important sub case of the model that we have considered here is when the reward/cost does not depend on the unobservable component, i.e., $C(x,y,a,b)=C^{\prime}(x,a,b)$ for some function $C^{\prime}(\cdot)$. In this case the accumulated reward/cost is no more unobservable and thereby we need not estimate it. Thus in that case it can be shown along similar lines as in Proposition 1 of \cite{Rieder17a}, that we can take the state space of the completely observable model as $X\times P(Y)\times \mathbb{R}_+\times (0,1]$ and the updating operator as $$\Phi(x,a,b,x^{'},\mu, z)(B):=\frac{\int_{Y}\big(\int_{B}q(x^{'},y^{'}\vert x,y,a,b)\nu(dy^{'})\big)\mu(dy)}{\int_{Y}q^{X}(x^{'}\vert x,y,a,b)\mu(dy)},$$ where $B$ is a Borel subset of $Y$ and $\mu \in P(Y)$.

(2) An important utility function is given by the function $U(x)=\frac{1}{\theta}e^{\theta x}$, where $\theta > 0$ is a fixed parameter. In this case again it is not necessary to keep track of the accumulated cost. It is enough to consider $X\times P(Y)\times (0,1]$ as the state space of the completely observable model. Again arguing similar to Theorem 3 in \cite{Rieder17a}, it can be shown that the value of the $N$ stage game problem is given by $J(x)=\alpha_N(x,Q_0,\theta)$ where $\alpha_0(x,\mu,\theta z)=\frac{1}{\theta}$ and for $n=1,2,\ldots,N$,
{\footnotesize\begin{align*}
\alpha_{n+1}(x,\mu,\theta z)=\inf_{\eta \in P(B(x))}\sup_{\zeta \in P(A(x))}\int_{B}\int_A\left[\alpha_n(x^{\prime},\Phi_e(x,a,b,x^{\prime},\mu,z),\beta \theta z)\hat{Q}^X(dx^{\prime}|x,\mu,a,b,\theta z)\right]\zeta(da)\eta(db), 
\end{align*} }
where $(x,\mu,z)\in X\times P(Y)\times (0,1]$ and for $B_1$, Borel subset of $X$ and $B_2$, Borel subset of $Y$,
$$ \hat{Q}^X(B_1|x,\mu,a,b,z)=\int_{B_1}\int_Y e^{zC(x,y,a,b)}q^X(x^{\prime}|x,y,a,b)\mu(dy)\lambda(dx^{\prime}),$$
$$\Phi_e(x,a,b,x^{\prime},\mu,z)(B_2)=\frac{\int_{B_2}\int_Y e^{zC(x,y,a,b)}q(x^{\prime},y^{\prime}|x,y,a,b)\mu(dy)\nu(dy^{\prime})}{\int_{Y}\int_Y e^{zC(x,y,a,b)}q(x^{\prime},y^{\prime}|x,y,a,b)\mu(dy)\nu(dy^{\prime})}.$$

\end{remark}
	
\bibliographystyle{plain}
	\bibliography{bib}		
	
\end{document}